% ----------------------------------------------------------------
% AMS-LaTeX Paper ************************************************
% **** -----------------------------------------------------------
\documentclass[12pt]{amsart}
\usepackage{amsmath,amscd}
\usepackage{multirow}
\usepackage[centering,text={15.5cm,22cm},
		marginparwidth=20mm]{geometry}

% ----------------------------------------------------------------
\vfuzz2pt % Don't report over-full v-boxes if over-edge is small
\hfuzz2pt % Don't report over-full h-boxes if over-edge is small
% THEOREMS -------------------------------------------------------
\newtheorem{thm}{Theorem}[section]

\newtheorem{lemma}[thm]{Lemma}

\theoremstyle{definition}
\newtheorem{defn}[thm]{Definition}
\theoremstyle{remark}
\newtheorem{rem}[thm]{Remark}
\numberwithin{equation}{section}
% MATH -----------------------------------------------------------
\newcommand{\To}{\rightarrow}
\newcommand{\A}{\mathbb{A}}
\newcommand{\C} {\mathbb{C}}
\newcommand{\CS} {\mathbb{C}^{*}}
\newcommand{\PP} {\mathbb{P}}
\newcommand{\PO} {\mathbb{P}^{1}}

\newcommand{\w} {\omega}
\newcommand{\EExt}{\operatorname{\mathcal{E}xt}}

% ----------------------------------------------------------------
\begin{document}

\title[]{Multiple Cover Calculation for the Unramified Compactification of the Moduli Space of Stable Maps}%
\author{Iman Setayesh}%
\address{School of Mathematics, Institute for Research in Fundamental Science (IPM), \newline
P. O. Box 19395-5746, Tehran, Iran}%
\email{setayesh@ipm.ir}

\thanks{}%
\subjclass{}%
\keywords{}%

%\date{}%
%\dedicatory{}%
%\commby{}%
% ----------------------------------------------------------------
\begin{abstract}
In this paper we compute the multiple cover Gromov-Witten integrals (analog of the Aspinwall-Morrison formula) for the unramified compactification of the moduli space of stable maps to an embedded $\PO$ in a Calabi-Yau threefold $X$ with the normal bundle $\mathcal{O}_{\PO}(-1) \bigoplus \mathcal{O}_{\PO}(-1)$.
\end{abstract}
\maketitle
% ----------------------------------------------------------------
\tableofcontents

\section{Introduction}

Let $X$ be a non-singular variety, we want to count the number of curves in $X$ that lie in the homology class $\beta\in H_2(X,\mathbb{Z})$. For a survey of different curve count strategies see \cite{PT}. 

In \cite{KKO} they construct a new compactification for the space of maps from a smooth $n$-pointed genus $g$ curve to a non-singular target. The idea is that the map is unramified except possibly at the marked points. One of the advantages of this compactification is that we will not get any collapsed components, so some of the difficulties caused by these components in the Gromov-Witten theory of the stable maps do not arise. For definition and basic properties of stable maps, see \cite{FP}. 

To be more precise, a map $f$ from a source curve $C$ to a nonsingular target $X$ is unramified at the smooth point $p\in C$ if the differntial map is injective, i.e. the map $df:T_{C,p}\To T_{X,f(p)}$ is injective. In order to compactify the space of maps, they allow the target to vary in the stack of Fulton-MacPherson degenerations of the space $X$. See \cite{FM} for definition and basic properties of Fulton-MacPherson configuration spaces. 

In \cite{KKO} they showed that the moduli space of stable unramified maps, has a perfect obstruction theory. In this paper we compute the genus $0$, multiple cover contribution for an embedded $\PO$ in a Calabi-Yau threefold $X$ with the the normal bundle $\mathcal{O}_{\PO}(-1) \bigoplus \mathcal{O}_{\PO}(-1)$. In the last section we observe that the invariants are not the same as in the Aspinwall-Morrison formula for multiple covers.

The main idea of the computation is to use the virtual localization formula (developed in \cite{GP}). Since the maps in this setup are relative maps, we use the relative virtual localization (see \cite{Li2,GV}). 

The paper is organized as follow: In section 2 we review the definition and basic properties of the stable unramified maps. In section 3 we review the relative virtual localization and add the required changes for our computation. In section 4 we consider a suitable $\CS$-action and describe the fixed point loci. Since the bubbles for any 3-manifold are the same (as they are $\PP^3$ blown up at a number of point), so the description of the fixed point loci works for arbitrary 3-manifolds with similar $\CS$-action. In section 5 we compute the contribution of each fixed point locus to the relative virtual localization formula, and compute the integral for $d$ equal to 2, and by a computer program provide the values for $d < 10$.

{\bf{Acknowledgement.}} The author would like to thank E. Eftekhary, B. Kim and V.Shende for many valuable discussions. A special thanks is due to Rahul Pandharipande for several helpful discussions. The author was partially supported by a grant from Iran's National Elites Foundation.\\

%%%%%%%%%%%%%%%%%%%%%%%%%%%%%%%%%%%%%%%%%%%%%%%%%%%%%%%%%%%%%%%%%%%%%%%%%%%%%%%%%%%%%%%%%%%%%%%%%%%%%%%%%%%%%%%%%%%%%%%%%%%%%%%%%%%%%%%%%%%%%%%%%%%%
\section{Preliminaries}

Let $X$ be a smooth variety. The Fulton-MacPherson configuration space $X[n]$ is a compactification for the space of $n$ distinct marked point on $X$. The idea in this compactification is whenever two points come together we blow up the target to separate them. More precisely, the degenerations of the target can be describe as follow:

Consider the family $X \times \A^1 \to \A$. We blow it up at a sequence of points $\{ p_i \}_{i\geq 0}$ such that each $p_i$ is on the fiber above $0\in\A^1$ and after the first $i$ blow ups $p_i$ is in the smooth locus of the total space. The total space of the fiber over $0$ is a possible Fulton-MacPherson degeneration of $X$. We denote the universal family over $X[n]$ by $X[n]^+$. For more details see\cite{FM}

\begin{defn}

A pair of maps $(\pi_{W/S}$ and $\pi_{W/X})$ 

$$\begin{CD}
W @>\pi_{W/X}>>  X\\
@V\pi_{W/S}VV \\
S 
\end{CD}$$
 from an algebraic space $W$ is called a Fulton-MacPherson degeneration of $X$ over a scheme $S$, if after an \'etale and  surjective base change $T \To S$ there exist an integer $n>0$ and a commutative diagram 
$$\begin{CD}
W_T @>>> X[n]^+ @>>> X\\
@VVV @VVV\\
T @>>> X[n]
\end{CD}$$
 such that after the base change the map $\pi_{W/X}$  is equal to the composition of $W_T \To X[n]^+\To X$.

\end{defn}

In \cite[section 2]{KKO}  they construct an Artin stack which parametrizes the Fulton-MacPherson degenerations of $X$ over a base $S$. We denote this stack by $\mathcal{FM}(X)$, when there is no confusion about the base scheme $S$.

Note that in a Fulton-MacPherson degeneration of $X$ with $dim X=n$ there is a component obtained by blowing up the space $X$ at a number of points, and all the other components are the blow ups of $\PP^n$ . There are two special cases, one is when the component is just $\PP^n$ and the other is the ruled component, that is the blow up of $\PP^n$ at just one point. In \cite{KKO} they allow the map to be ramified in the marked points, but since in our work we do not have any markings, we simplify the definitions for our purpose.

\begin{defn}
Let $X$ be a nonsingular projective variety of dimension $n$. The unramified compactification of moduli space of smooth stable maps from a curve of genus $g$ to $X$, representing the class $\beta\in H_2(X,\mathbb{Z})$, parametrizes the triples $(C,\pi_{W/X} : W \To  X,f:C\to W)$ satisfying:
\begin{enumerate}
\item $C$ is a stable nodal curve with arithmetic genus $g$.
\item $\pi_{W/X} : W \To  X$ is a Fulton-MacPherson degeneration over $k$.
\item $(\pi_{W/X}\circ f)_*[C]=\beta\in H_2(X,\mathbb{Z})$.
\item The inverse image of the nonsingular locus of $W$ is exactly the nonsingular locus of $C$, i.e. $f^{-1}W^{ns}=C^{ns}$.
\item (Admissibility condition) At each node $n\in C$, which by condition $(3)$ is mapped to an intersection divisor of $D \subset W$ the two branches of $B_1,B_2 \subset C$ map to two different component of the target intersecting at $D$, and the intersection multiplicities of them with $D$ at the image of $n$ are the same. 
\item (Stability condition)
\begin{itemize}
\item For each ruled component $W_0$ there is a component of $C$ which is mapped by $f$ into $W_0$ and the image is not a fiber of the ruling. 
\item For each $\PP^n$ component $W_1$ there is a component of $C$ which is mapped by $f$ into $W_1$ and the image is not a straight line.
\end{itemize}
\end{enumerate}

\end{defn}

Condition $5$ is standard in the relative Gromov-Witten theory and in \cite{Li1,Li2} is called predeformability, see also \cite{IP,LR}.

In \cite{KKO} they show that this functor is representable by a Deligne-Mumford stack (which we denote by $\overline{\mathcal{M}}^{u}_{g}(X,\beta)$), and has a perfect obstruction theory so the unramified Gromov-Witten invariants can be defined. 

\section{Relative Deformation Theory}

A perfect obstruction theory for the moduli space of relative stable maps is constructed in \cite{Li2,GV}. Here we explain this construction and add required modifications for our problem. Following \cite{GV} we will consider the natural morphism:

$$\Phi:\overline{\mathcal{M}}^{u}_{0}(X,d) \To \mathcal{FM}(X) \times \mathfrak{M}_0$$

\noindent and the relative obstruction theory for this morphism. This means that we have fixed the source and target, so the deformation and obstruction theory of such a map can be understood in term of known cohomology groups.

Let  $X$ be a smooth scheme and $D$ be a smooth divisor we use $\Omega_{X}(logD)$ to denote the sheaf of differentials with logarithmic pole along $D$, and by $T_{X}(logD)$ its dual (for definition and properties of log-differentials see \cite{Saito}).

Let $Y$ be an FM-configuration space and $f : C \to Y$ be a stable map, then we define $f^{\dag}T_{Y}$ to be the quotient of $f^{*}T_{Y}$ by the torsion subsheaf supported at the nodes of $C$. If $C_i$ is an irreducible component of $C$ being mapped to the component $Y_i$ of the target, the the restriction of sections of this sheaf to $C_i$ are just the sections of the  pullback of the sheaf of differentials of $Y_i$ with logarithmic pole along the singular locus.

Let $f:C \To W$ be a stable map to an FM-configuration space, then we have:

$$RelDef(f)=H^{0}(C,f^{\dag}T_{W})$$

\noindent since when we fixed the target, then a predeformable map is a log smooth map from a nodal curve to a scheme with normal crossing singularities, and so their deformation theory is given by the sheaf of log differentials (see \cite{Kato}).

If we denote the number of nodes of $C$ with image in the $i^{th}$ distinguished divisor (intersection of two smooth components) of $W$ by $n_i$, and the line bundle associated to smoothing this divisor by $L_i$, i.e. if $D_i$ is the $i^{th}$ divisor be the intersection of two irreducible components  $W^0$ and $W^1$ of $W$ then  $L_i=N_{D_i}W^0\otimes N_{D_i}W^1$. The space of relative obstructions for this map has a natural filtration:

$$0 \To H^{1}(C,f^{*}T_{W}) \To RelOb(f) \To H^{1}(C,f^{-1}\EExt^{1}(\Omega_{W},\mathcal{O}_{W}))\simeq \bigoplus L_i^{n_i}\To 0$$

\noindent In the above sequence the first term is the obstruction space for deformation of $f$ as a log smooth morphism, and the term on the right is a local obstruction coming from a compatibility condition. (For a discussion about this see \cite{GV} page 9)

In our computation we will use three types of exact sequences, each of them allows us to reduce the computation to a simpler case. We will say a few words about them.

\vspace{1mm}
\begin{enumerate}

  \item Let $\Sigma= \Sigma_{1} \bigcup \Sigma_{2}$ be a  nodal curve with one node, and $Y=Y_1 \bigcup Y_2$ be a scheme with normal crossing singularities along the intersection of $Y_1$ and $Y_2$ which we will denote by $D$, and let $g$ be a map from $\Sigma$ to $Y$ , then we have the analog of the normalization sequence for the sheaf of log differentials:
 \begin{align*}
0 \To  H^{0}(\Sigma,f^{\dag}T_{Y})  \To H^{0}(\Sigma_{1},f^{*}T_{Y_{1}}(-logD))\oplus H^{0}(\Sigma_{2},f^{*}T_{Y_{2}}(-logD)) \To T_{g(p)}D  \\
\To H^{1}(\Sigma,f^{\dag}T_{Y})  \To  H^{1}(\Sigma_{1},f^{*}T_{Y_{1}}(-logD))\oplus H^{1}(\Sigma_{2},f^{*}T_{Y_{2}}(-logD)) \To 0 \\ \tag{$\ast$}
\end{align*}

\noindent this allows us to reduce the computation of deformation/obstruction contributions to the irreducible components. 

 \item We also need to relate the cohomology of the logarithmic tangent bundle to those of the ordinary tangent bundle. Let $Y$ be a smooth scheme and $D$ a smooth effective divisor, we have the following exact sequence of sheaves: \vspace{3mm}

\begin{center}
 $ 0 \To T_{Y}(-logD) \To T _{Y} \To N_{D}Y \To 0$
\end{center} \vspace{3mm}

\noindent in which the first sheaf is the sheaf of tangent vector fields which are tangent to $D$, so a section of the quotient sheaf consists of a normal vector field over $D$. So let $\Sigma $ be a curve and $g$ a map from $\Sigma$ to $Y$, if we pull back the above sequence to $\Sigma$ and take the cohomology we get:
\begin{align*}
0 \To H^{0}(\Sigma,g^{*}T_{Y}(-logD)) \To H^{0}(\Sigma,g^{*}T_{Y}) \To H^{0}(\Sigma,g^{*}N_{D}Y) \\
\To H^{1}(\Sigma,g^{*}T_{Y}(-logD)) \To H^{1}(\Sigma,g^{*}T_{Y}) \To 0
\end{align*}

There is one particular case that we use it in this paper, when the ambient space is $\PP^n$ and $D$ is the hyperplane defined by $x_0=0$. In this case if the analog of Euler's exact sequence for logarithmic differentials shows that:\vspace{3mm}

\begin{center}
     $\begin{array} {lcccr}
     0 \To \mathcal{O} \To & \mathcal{O}\oplus\mathcal{O}(1)^{\oplus n} & \To &  T_{\PP^n}(-logD) & \To 0 \\
     & (f_0,(f_i)_{i\geq 1}) & \mapsto & f_0 x_0 \frac{\partial}{\partial x_0} + \sum\limits _{i\geq 1 } f_i \frac{\partial}{\partial x_i} &
 \end{array}$
 \end{center}\vspace{3mm}

\noindent which gives us $T_{\PP^n}(-logD) = \mathcal{O}(1)\otimes W^{n}$.

\item The third exact sequence that we will need is the one relating the tangent bundle of the blow up to the tangent bundle of the original scheme. Let $Z$ be a scheme and $q$ be a smooth point, and $\tilde{Z}$ be the blow up of $Z$ at $q$, then we have: \vspace{3mm}

\begin{center}
$0 \To T_Z \To p^{*}T_{\tilde{Z}} \To j_{*}\mathcal{Q} \To 0$
\end{center} \vspace{3mm}

\noindent where $p$ is the projection from $\tilde{Z}$ to $Z$, and $j$ is the inclusion of the exceptional divisor, and $\mathcal{Q}$ is the universal quotient bundle over the exceptional divisor, which means that on the exceptional divisor it's given by:\vspace{3mm}

\begin{center}
$0 \To \mathcal{O}(-1) \To \mathcal{O}\otimes V^{3} \To \mathcal{Q} \To 0$
\end{center} \vspace{3mm}
\end{enumerate}

%%%%%%%%%%%%%%%%%%%%%%%%%%%%%%%%%%%%%%%%%%%%%%%%%%%%%%%%%%%%%%%%%%%%%%%%%%%%%%%%%%%%%%%%%%%%%%%%%%%%%%%%%%%%%%%%%%%%%%%%%%%%%%%%%%%%%%%%%%%%%%%%%%%%%%%%%%%%%%

\section{Fixed Point Loci}

 We will consider the local model for $X$ to be the total space of the $\mathcal{O}_{\PO}(-1) \bigoplus \mathcal{O}_{\PO}(-1)$, and take the $\CS$-action to be as follow:

In the first copy of $\mathcal{O}(-1)$ which can be realized as $\C^{2} \backslash \{0\}$ we have $t.(x,y)=(x,t^{-1}y)$, and on the second copy it's given by $t.(x,y)=(tx,y)$. It's clear that both of these actions induce the same action on the base $\PO$ so we have a well defined action on $X$ . Let $-\alpha$ be the weight of the standard action of $\CS$ on $\mathbb{C}$. 

For a fixed map the image of the ramification of the map are invariant under the action so the ramification points are above the two fixed points of the action on $\PO$, i.e. $0$ and $\infty$. It means that for the target of such maps we have to consider a FM configuration space for $X$ with bubbles only above $0$ and $\infty$. The weights of the $\CS$-action on the tangent space at both of these points are: $\{\alpha ,0,-\alpha\}$. For example at $0=[0:1]$ we have: 

The weights of the action on the fiber of first $\mathcal{O}(-1)$ is $\alpha$, the fiber of the second copy of $\mathcal{O}(-1)$ over $0$ is $\CS$-fixed, i.e. the weight is zero, and the weight of the action on the tangent space of $\PO$ at $0$ is $-\alpha$. So the set of weights for the bubble above $0$ (and similarly for $\infty$) are $\{\alpha,0,0,-\alpha\}$, the extra 0 weight comes from the copy of trivial bundle in $\PP(T_{X}\oplus I)$.

 We split the target into three components, one is $X$ blown up at two points which we denote by $X_{0,\infty}$, and the other two components are the bubbles above zero and infinity, which we denote by $Y_0$ and $Y_{\infty}$. By relative virtual localization theorem, in order to compute the contribution of each fixed point component, its enough to compute the contribution coming from maps with target in each of these three pieces, and combine them according to the formula.

Now we will give a description of the fixed maps. We start by looking at a component of the source with its image in one of the bubbles. We start by considering the bubble above 0, and denote the coordinate of the $\PP^3$ by $t_{0},t_{1},t_{2},t_{3}$, which are associated to (resp.) $\mathcal{T}_{X}$, first and second copy of $\mathcal{O}(-1)$, and the coordinate associated to the trivial bundle. The induced action is given by $t.(t_{0},t_{1},t_{2},t_{3})=(t^{-1}.t_{0},t.t_{1},t_{2},t_{3})$. Then the distinguished divisor is given by $t_{3}=0$. We denote the component of the source with the image in the first bubble above $0$ by $C_1$. The image of $C_{1}$ should meet the distinguished divisor ($t_3=0$) at the intersection of $\PO$ and exceptional divisor which is $[1,0,0,0]$.

Since we know that image of $f$ meets the $t_{3}=0$ at one point with multiplicity $d$ then the restriction of $f$ to $C_{1}$ should be of the following form:\vspace{3mm}

     $$\begin{array} {ccccl}
     f & : & \PO & \To & \PP^3 \\
      & & [x;y] & \mapsto & [g_{1}(x,y);g_{2}(x,y);g_{3}(x,y);y^d g_{4}(x,y)] \\
       & & & & \text{with } deg(g_{1})=deg(g_{2})=deg(g_{3})=d+deg(g_{4})=n
 \end{array}$$

Since the class of this map is fixed, it means that up to an automorphism of the source and an automorphism of the target, the class of $t.f$ remains unchanged. The automorphism group of target if it has no point blown up is generated by dilation and linear transformations of $\A^3 \subset \PP^3$. Any automorphism of the source curve should fix the intersection points of $C$ and the special divisor i.e. $t_3=0$. So if $g_4$ has at least 2 distinct roots then the automorphism group of source is a subgroup of the finite group of automorphisms of $\PO$ which permutes these points.

Here we consider two different cases:

\begin{enumerate}
  \item $g_4$ has at most one root, i.e. $g_4(x,y)=x^{n-d}$ ($n\geq d$). In this case the intersection point(s) of $C_{1}$ and the special divisor should be fixed.
  \item $g_4$ has at least two distinct roots.
\end{enumerate}

\vspace{3mm}
\textbf{Case 1:} In this case for each $t \in \CS$ there exist $u,v \in \CS$,$a,b,c,w \in \C$ such that:

\noindent $[g_{1}(x,y);g_{2}(x,y);g_{3}(x,y);y^d x^{n-d}] =$
\begin{flushright}
$ [t^{-1}.v.g_{1}(ux+wy,y)+a.y^d (ux+wy)^{n-d};t.v.g_{2}(ux+wy,y)+b.y^d (ux+wy)^{n-d}; v.g_{3}(ux+wy,y)+c.y^d (ux+wy)^{n-d};y^d (ux+wy)^{n-d}]$
\end{flushright}

 \noindent which means for some $\lambda \neq 0$ we have: \vspace{3mm}

 $$\left \{ \begin{array}{lll}
 g_{1}(x,y)&=&\lambda t^{-1}.v.g_{1}(ux+wy,y)+a.y^d g_{4}(x,y) \\
 g_{2}(x,y)&=&\lambda t.v.g_{2}(ux+wy,y)+b.y^d g_{4}(x,y) \\
 g_{3}(x,y)&=&\lambda v.g_{3}(ux+wy,y)+c.y^d g_{4}(x,y) \\
 x^{n-d}&=&\lambda (ux+wy)^{n-d}
 \end{array} \right. $$

We know that $g_{1}(0,1)\neq0$ (since the intersection point with the distinguished divisor is $[1;0;0;0]$) so $g_{1}$ has a summand of $x^n$, by considering the coefficient of this term in both sides of the first equation we get: $v.u^d=t$.

The last identity shows:

$$ \begin{cases}
   \lambda =u^{d-n} , w=0 & \text{if } n \neq d \\
   \lambda =1        & \text{if } n = d
  \end{cases}$$

In both cases up to an automorphism of the target (i.e. an appropriate choice of $a$,$b$ and $c$) we can assume that $g_1$, $g_2$ and $g_3$ have no summand of the form $x^{n-d}y^d$.

\begin{itemize}
  \item If $n \neq d $ by the above assumption we have $a=b=c=0$, so we have:

   $$\left \{ \begin{array}{lll}
 u^{n-d} g_{1}(x,y)&=&t^{-1}.v.g_{1}(ux,y) \\
 u^{n-d} g_{2}(x,y)&=&t.v.g_{2}(ux,y) \\
 u^{n-d} g_{3}(x,y)&=&v.g_{3}(ux,y)
 \end{array} \right. $$

  and if we write $ g_{1}(x,y) = \displaystyle \sum_{i=0}^{n} a_{i}x^{i}y^{d-i} $, $ g_{1}(x,y) = \displaystyle \sum_{i=0}^{h} b_{i}x^{i}y^{d-i} $ and $ g_{1}(x,y) = \displaystyle \sum_{i=0}^{k} c_{i}x^{i}y^{d-i} $ with $a_n\neq 0$,$b_h\neq 0$ and$c_k\neq 0$ then we get:

 $$ \begin{cases}
   a_i (t^{-1} v - u^{n-d-i} )=0 & \text{for } i=0,...,n \\
   b_i (tv-u^{n-d-i})=0        & \text{for } i =0,...,h  \\
   c_i (v-u^{n-d-i})=0        & \text{for } i=0,...,k
  \end{cases}$$

  If either $h$ or $k$ are not zero then by looking at the term of degree $h$ of $g_2$ or degree $k$ of $g_3$ , together with $v.u^d=t$ we can write $u$ and $v$ in terms of $t$ (up to a root of unity). Then the above system shows that all these $g_i$'s should be monomials, and $f$ has one of the following forms:

  \begin{center}
     $\begin{array} {ccccll}
       f & : & \PO & \To & \PP^3 & \\
         & & [x;y] & \mapsto & [ x^n;\alpha x^{h}y^{n-h};\beta x^{k}y^{n-k};x^{n-d}y^d] &  \\
               & & & & & \text{with } n+h=2k \\
          \text{or} & & & & &\\
         & &  [x;y] & \mapsto & [x^n;x^{h}y^{n-h};0;x^{n-d}y^d]&\\
          & & & & & \text{with } 1 \leq h \leq d-1 \\
      \text{or} & & & & &\\
      & &  [x;y] & \mapsto & [x^d;0;x^{k}y^{n-k};x^{n-d}y^d]& \\
      & & & & & \text{with } 1 \leq k \leq d-1 \\
 \end{array}$
 \end{center}\vspace{3mm}

  If $h=k=0$ then $f$ has the following form:

  $$\begin{array} {ccccl}
     f & : & \PO & \To & \PP^3 \\
      & & [x;y] & \mapsto & [g_{1}(x,y);0;0;x^{n-d}y^d] \\
       & & & & \text{with } deg(g_{1})=n
 \end{array}$$

  \item If $n=d$, similarly we have:

By considering the terms with highest $x$ degree on $g_{2}$ and $g_{3}$, if those terms are $x^{h}y^{d-h}$ and $x^{k}y^{d-k}$ (resp.) with $k,h \neq 0$, we get:  $t.v=u^{-h}$ and $v=u^{-k}$, which implies: \vspace{3mm}

  $$\left \{ \begin{array}{lll}
 u^{d-2k+h}&=&1 \\
 u^{d-k}&=&t \\
 u^{-k}&=&v
 \end{array} \right. $$

 \noindent and since this system of equations should have a solution for each $t\in\CS$, we have $d+h=2k$, and since $g_{2}(0,1)=g_{3}(0,1)=0$, then we have $h,k\leq d-1$.

 For $d\geq 2$ if we write $g_{1}$ as $ g_{1}(x,y) = \displaystyle \sum_{i=0}^{d} a_{i}x^{i}y^{d-i} $  since $g_1(x,y)$ satisfies:

 \noindent $g_{1}(x,y)=t^{-1}.v.g_{1}(ux+wy,y)+a.y^d $  \hspace{3mm} (for each $i$), we have:

 $$ a_{i}.u^{d}= \displaystyle \sum_{j=i}^{d}{j \choose i} a_{j}u^{i}w^{j-i}$$

 For $i=d-1$ this gives us

 \begin{equation}\label{a_i eqn}
    w=\displaystyle\frac{a_{d-1}}{da_{d}}(u-1)
 \end{equation}

 \noindent and we see that unless we have $h=k=0$ we can write $u$ in terms of $t$ (up to a root of unity), so we can view (\ref{a_i eqn}) as an identity in term of the parameter $t$. In this case for $1\leq i\leq d-2 $ by looking at the term with highest $u$ power on both sides we get:

$$ a_{i}= a_{d} \displaystyle {d \choose i}  \left(\frac{a_{d-1}}{da_{d}}\right)^{d-i} $$

\noindent and we can see that with these $a_{i}$'s we have: \vspace{3mm}

$$\begin{array}{lll}
  \displaystyle \sum_{j=i}^{d}{j \choose i} a_{j}u^{i}w^{j-i} &=& \displaystyle \sum_{j=i}^{d}{j \choose i}{d \choose j} a_{d} (\frac{a_{d-1}}{da_{d}})^{d-j}u^{i}w^{j-i} \\
 &=& \displaystyle {d \choose i} a_{d} u^{i} \sum_{j=i}^{d}{d-i \choose j-i}  (\frac{a_{d-1}}{da_{d}})^{d-j}w^{j-i}\\
 &=& \displaystyle {d \choose i} a_{d} u^{i} (\frac{a_{d-1}}{da_{d}}u)^{d-i}\\
 &=& a_{i}.u^{d}\\
 \end{array}$$

If we substitute $a_i$'s in $g_1$ we see that modulo a factor of $y^d$ it has the following form:

 $$\begin{array}{lll}
 g_{1}(x,y) &=& \displaystyle \sum_{i=0}^{d} a_{i}x^{i}y^{d-i} \\
 &=& \displaystyle \sum_{i=0}^{d}  {d \choose i} a_{d} (\frac{a_{d-1}}{da_{d}})^{d-i}x^{i}y^{d-i} \\
 &=& a_{d} (x +\displaystyle\frac{a_{d-1}}{da_{d}}y)^{d} \vspace{3mm} \\
 &=& a_{d} (x +\displaystyle\frac{w}{u-1}y)^{d}
 \end{array}$$

  By the same argument we can show that $g_2$ and $g_3$ can be written in a similar form.

 $$\begin{array}{lll}
 g_{2}(x,y) &=& b_{h} (x +\displaystyle\frac{w}{u-1}y)^{h} y^{n-h} \\
 & & \\
 g_{3}(x,y) &=& c_{k} (x +\displaystyle\frac{w}{u-1}y)^{k} y^{n-k}
 \end{array}$$

  So unless $h=k=0$ by a change of parameter on the source curve, we can choose a representative for $f$ in which all $g_i$'s are monomials, and $f$ has a representative of the following form: \vspace{3mm}

\begin{center}
     $\begin{array} {ccccll}
       f & : & \PO & \To & \PP^3 & \\
         & & [x;y] & \mapsto & [ x^n;l x^{h}y^{n-h};m x^{k}y^{n-k};y^n] & \text{with } n+h=2k
 \end{array}$
 \end{center}\vspace{3mm}

 Note that if we have $h=0$ or $k=0$ then by the same argument as above after an appropriate automorphism, $f$ can be written in one of the following forms:

 $$\begin{array} {rccccll}
      & f & : & \PO & \To & \PP^3 &\\
      & & & [x;y] & \mapsto & [x^n;x^{h}y^{n-h};0;y^n]&\\
      \text{or}& & & & & &\\
      & & & [x;y] & \mapsto & [x^n;0;x^{k}y^{n-k};y^n]& \\
      & & & & & & \text{with } 1 \leq h,k \leq n-1 \\
 \end{array}$$

If $h=k=0$ then

$$\begin{array} {ccccl}
     f & : & \PO & \To & \PP^3 \\
      & & [x;y] & \mapsto & [g_{1}(x,y);0;0;y^n] \\
       & & & & \text{with } deg(g_{1})=n
 \end{array}$$

\noindent using the above notation, by picking $v=t$ we see that the map $f$ is fixed under the $\CS$ action.

For $d=1$, the map can be written as $[x;y] \mapsto  [x;\alpha x;\beta x;y]$, and for it to be fixed we should have $\alpha=\beta=0$, i.e. the map is given by:

$$\begin{array} {ccccl}
     f & : & \PO & \To & \PP^3 \\
      & & [x;y] & \mapsto & [x;0;0;y]
 \end{array}$$

\end{itemize}

\textbf{Case 2:} In this case similarly (with notations as above) if have $h \text{ or } k \neq 0$ then we can write $u$ in terms of $t$. But since the automorphism group of the source is finite this is a contradiction, which shows that $g_2=g_3=0$. In this case the $f$ has the following form:

$$\begin{array} {ccccl}
     f & : & \PO & \To & \PP^3 \\
      & & [x;y] & \mapsto & [g_{1}(x,y);0;0;g_4(x,y)] \\
       & & & & \text{with } deg(g_{1})=deg(g_{4})=n
 \end{array}$$

\begin{lemma}
If the generic point of a component of the fixed point loci, has at least two distinct components being mapped to $X_{0,\infty}$ then the contribution of this fixed point locus to $[\overline{\mathcal{M}}_{\Gamma}(X_{0,\infty},D)]^{vir}$ and hence the total contribution to the integral will be zero.
\end{lemma}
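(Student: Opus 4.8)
The plan is to show that any such fixed locus $F$ carries a fixed part of the obstruction theory containing a trivial subbundle, so that its virtual class vanishes. First I would pin down the generic member of $F$. By the fixed-point analysis above, every source component mapping to $X_{0,\infty}$ must map onto the proper transform $\tilde{\PO}$ of the central curve (the unique $\CS$-invariant curve in the relevant class), and, being unramified away from the nodes, it does so as a cover $[x;y]\mapsto[x^{\delta};y^{\delta}]$ totally ramified over the two points $\tilde{0},\tilde{\infty}$ where $\tilde{\PO}$ meets the relative divisor $D=E_{0}\cup E_{\infty}$. Applying the blow-up sequence (item (3) of Section 3) once at $0$ and once at $\infty$, I would first record that $N_{\tilde{\PO}/X_{0,\infty}}$ has total degree $-6$, namely $\mathcal{O}(-3)\oplus\mathcal{O}(-3)$, and then read off the $\CS$-weights of the two summands at $\tilde{0}$ and $\tilde{\infty}$ from the weights $\{\alpha,0,-\alpha\}$ tabulated above.

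Next I would reduce the obstruction computation to these components. Applying the normalization sequence $(\ast)$ along the nodes lying over $D$ splits off, for each component $C_{j}$ mapping to $X_{0,\infty}$, the contribution of $H^{\bullet}(C_{j},f^{*}T_{X_{0,\infty}}(-\log D))$. The logarithmic tangent sequence (item (2)) then identifies the summands on $\tilde{\PO}$: the tangential direction is $T_{\tilde{\PO}}(-\tilde{0}-\tilde{\infty})=\mathcal{O}$, while the two normal directions are the pullbacks of the $\mathcal{O}(-3)$'s. Pulling these back along the degree-$\delta_{j}$ cover and computing $H^{1}(\PO,\mathcal{O}(-3\delta_{j}))$ equivariantly, I expect to find that each normal summand contributes \emph{exactly one} $\CS$-weight-zero class to the obstruction space, independently of $\delta_{j}$. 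These weight-zero classes are precisely the fixed obstructions that assemble into the fixed obstruction bundle over $F$.

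The decisive step is the bookkeeping that distinguishes one component from several. For a single component the weight-zero obstructions are matched by the weight-zero deformations coming from the relative and automorphism data (the field $z\,\partial_{z}$ and the node-smoothing line bundles $L_{i}$ over $D$), so $F$ contributes a generically nonzero number. Once there are $m\geq2$ components, the number of weight-zero obstructions grows with $m$ while the compensating weight-zero deformations do not, so the fixed obstruction bundle acquires a trivial summand $\mathcal{O}_{F}^{\oplus r}$ with $r>0$. A trivial summand forces the top Chern (Euler) class to vanish, hence $[F]^{\mathrm{vir}}=0$ in $[\overline{\mathcal{M}}_{\Gamma}(X_{0,\infty},D)]^{\mathrm{vir}}$. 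By the relative virtual localization formula the contribution of $F$ to the integral is a fixed multiple of $[F]^{\mathrm{vir}}$, and therefore also vanishes.

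The main obstacle I anticipate is exactly this last comparison: one must show rigorously that the excess weight-zero obstruction is a \emph{trivial} bundle over all of $F$ (not merely nonzero at the generic point), and that nothing in the relative deformation theory — the smoothing line bundles $L_{i}$ or the expansions of the target along $D$ — secretly cancels it when $m\geq2$. Carrying out the log-twists and the equivariant $H^{1}$ computations uniformly in the ramification profile $(\delta_{1},\dots,\delta_{m})$, while keeping track of the two extra zero weights coming from the trivial bundle in $\PP(T_{X}\oplus I)$ noted above, is where the care is required.
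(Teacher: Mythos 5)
Your analysis of the components mapping to $X_{0,\infty}$ is correct and is, up to reorganization, the same computation the paper performs. You restrict the log tangent bundle to the proper transform of $\PO$ and split it as $\mathcal{O}\oplus\mathcal{O}(-3)\oplus\mathcal{O}(-3)$, finding exactly one weight-zero class in $H^{1}(\PO,\mathcal{O}(-3\delta_{j}))$ for each normal summand; the paper instead writes the same K-theory class as $\chi(f_{j}^{*}T_{X})$ minus the tangency terms $H^{0}(f_{j}^{*}\mathcal{O}_{D_{\bullet}})\otimes V_{\bullet}$ and finds the two zero weights there (at $j=0$ against the $0$-weight of $V_{\bullet}$). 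Either way the net count of weight-zero terms in deformations minus obstructions for the base level is $2-2m$. One misattribution: for $m=1$ the two weight-zero obstructions are cancelled by the two weight-zero deformations of the target coming from moving the bubbles (the $0$-weights of $T_{0}X$ and $T_{\infty}X$), not by $z\partial_{z}$ (which cancels against the dilation automorphism of the source) nor by the node-smoothing lines $L_{i}$ (whose weights $\pm\alpha/d_{j}+\omega_{\bullet}$ are nonzero). Your worry about triviality of the excess weight-zero piece over the fixed locus is easily dispatched: the maps from these components to the proper transform of $\PO$ are rigid, so that piece of $H^{1}(f^{*}N)$ is a constant vector space over the fixed locus.

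The genuine gap is that you never examine the components of $C$ mapping into the bubbles $Y_{0}$ and $Y_{\infty}$, and this is where the paper spends the bulk of its proof. The inequality $2-2m<0$ only shows that the base level together with the global target and node data contributes net weight-zero obstruction; to conclude that the fixed part of the \emph{total} obstruction theory has excess rank, one must rule out that the bubble levels supply $2m-2$ uncancelled weight-zero deformations. The paper does this by a case analysis (image of a bubble component a line versus not a line; bubble equal to $\PP^{3}$ versus $\PP^{3}$ blown up), showing that the zero weights from $\chi(g_{i}^{*}\mathcal{O}(1))\otimes W^{3}$, the tangency terms $H^{0}(g_{i}^{*}\mathcal{O}_{D_{j}})\otimes V_{j}$, and the automorphisms and deformations of the bubble targets either cancel amongst themselves or account exactly for tangent directions of the fixed locus, and hence cannot absorb the excess from the base. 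You flag that care is required, but you locate the danger in the smoothing lines $L_{i}$ and the expansions along $D$ rather than in the maps into the bubbles, so as written the proposal does not close this step.
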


 \begin{proof}

 If we denote by ${C_i}$ the components that are being mapped to $X_{0,\infty}$, then we have:

\begin{itemize}
  \item Deformations of the map $f_i$ which (in the K-theory) by the results of last section are given by:
   $$\begin{array}{lll}
\bigoplus\chi(f_i^{*}T_{X_{0,\infty}}(-log D_0 -log D_{\infty}))&=&\bigoplus \chi(f_i^{*}T_X) \\
& & - \left( \bigoplus H^{0}(f_i^{*}\mathcal{O}_{D_{0}})\otimes V_0+\bigoplus H^{0}(f_i^{*}\mathcal{O}_{D_{\infty}})\otimes V_{\infty} \right)
\end{array}$$
  \item $T_{D_0}|_{f(N_i)}$, $T_{D_{\infty}}|_{f({N'}_i)}$, which have no weight zero piece.
  \item For each curve $C_i$ the automorphisms of the source have weighs $\{\frac{\alpha}{d_i},\frac{-\alpha}{d_i},0\}$.
  \item Deformations of the target coming from moving the bubbles have weights $\{\alpha,-\alpha,0\}$ , $\{\alpha,-\alpha,0\}$.
  \item For each curve $C_i$ the deformations of the source coming from moving the special points have weights $\{\frac{\alpha}{d_i},\frac{-\alpha}{d_i}\}$
  \item We also have deformation of the source coming from smoothing the nodes with weights $\{\frac{\alpha}{d_i}+\w_{0},\frac{-\alpha}{d_i}+\w_{\infty}\}$ (where $\w_0$ , $w_\infty$ are the weights of the action on the tangent space of the curves with image in the bubble above $0$ and $\infty$ at respective nodes).
\end{itemize}

We see that the zero weights coming from the automorphisms of the source cancel out with the zero weights coming from deformation of the map $f_i$ (i.e. $\chi(f_i^{*}T_X)$ ). But we also get zero weight contributions from the tangency condition (i.e. $\bigoplus H^{0}(f_i^{*}\mathcal{O}_{D_{\infty}})\otimes V_{\infty}$), since the weights on $V$'s are $\{\alpha,-\alpha,0\}$ and the weights of $H^{0}(f_i^{*}\mathcal{O}_{D_{\infty or 0}})$ are (up to a sign depending on $0$ or $\infty$) $\{\frac{j}{deg(f_i)}\alpha\}_{j=0}^{deg(f_i)}$. In order to get a non-trivial contribution these zero weights should cancel out with the weight zero term coming from deformation of the target, but if we have more than one component they will not cancel out and we will get a zero contribution. This can not cancel out with terms from the bubbles either as we see in the below.

Fix a bubbles $B$, we denote the restriction of $f$ to components with image in $B$ by $g_i$. If $B$ is $\PP^3$ blown up at $m$-points, then we have the following terms:

\begin{itemize}
  \item Deformations of the maps $g_i$ which (in the K-theory) by the results of last section are given by:

   $\bigoplus_{i}\chi(g_i^{*}T_{X_{0,\infty}}(-log D_0 -log D_1-\ldots -log D_m)$
$$=\bigoplus_{i} \chi(g_i^{*}T_X(-log D_0)) - \left( \bigoplus_{i,j} H^{0}(g_i^{*}\mathcal{O}_{D_j})\otimes V_j \right)$$
  \item $T_{D_0}|_{f(N_i)}$, $T_{D_{j}}|_{f({N'}_{i,j})}$
  \item Automorphisms of the target.
  \item For each curve $C_i$ the automorphisms of the source.
  \item Deformations of the target coming from moving the bubbles.
  \item For each curve $C_i$ the deformations of the source coming from moving the special points.
  \item We also have deformation of the source coming from smoothing the nodes.
\end{itemize}

We consider the following three cases separately:

\begin{enumerate}
  \item If the target has at least one point blown up and the image of $C_i$ is a line:

  \textbf{Deformations of the source:} For each node we have a zero weight coming from moving this node, but all these weight zero  terms are in the fixed piece of the normal bundle and have no contribution to the moving part (see \cite[section 3]{GP}). 

  \textbf{Deformations of the map and target:} If the image of $C_i$ passes through $k$ bubbles on the target and has ramification above $k'$ of them, then moving these bubbles gives $k'$ weight zero terms (Note that moving other $k-k'$ bubbles would be taken care of when we consider the contribution of the component passing through those bubbles and ramified over them, and such component exist by the stability condition and the fact that genus of the source curve is zero) Deformation of the map given by $\chi(g_i^{*}\mathcal{O}(1))\otimes V^{3}$ has $d+1$ weight zero terms, which means totally we have $d+k'+1$ weight zero terms from deformation of the map and target.

  \textbf{Automorphisms of the source:} They give us $3$ weight zero terms.

  \textbf{The tangency conditions:} Given by $H^{0}(g_i^{*}\mathcal{O}_{D_j})\otimes V_j$ give us $dk$ zero terms (Since at each node if the ramification index is $p$ we get $p+1$ weight zero terms).

  Hence in order to get a non zero contribution we must have $dk+3 \geq d+k'+1$ which holds since we have:

  $$(d-1)(k-1)+1 > 0 .$$

  \item If the target has at least one point blown up and the image of $C_i$ is not a line (in this case the only ramification happens over $[0:0:0:1]$):

  \textbf{Automorphisms/Deformations of the source:} Since image of $C_i$ is not a line, then by the description of fixed point loci, we know that there is exactly one ramification point on $C_i$. The automorphisms of $C_i$ should fix the two nodes, (we have at least $2$ of them), and the one dimensional remaining automorphisms are given by dilation which gives a zero weight contribution.

  \textbf{Automorphisms/Deformations of the target:} Since the target is blown up in at least one point and the automorphisms should fix that point, the only automorphisms that remain are given by dilation which is a weight zero term.

  \textbf{The tangent space at nodes:} In this case we won't get any zero weights from $T_{D}|_{f(N)}$'s by the above computations.

  \textbf{Deformations of the map given by $\chi(g_i^{*}\mathcal{O}(1))\otimes W^{3}$ :} The $\CS$ action can have $2$ or $3$ weight zero terms.

  If there is a 1-dimensional $\CS$-fixed component passing through this point in the moduli space, then there will be $3$ zero weights and 2 otherwise.

  In the first case 2 of these weights cancel out with the weight zero terms coming from automorphisms of the target and source, and the remaining zero weight is part of the fixed part of the deformation bundle over the 1-dimensional fixed loci so it does not contribute to the normal bundle. Note that if there are more than one such component in one bubble the number of remaining zero weights (those who won't be canceled out with the zeroes from automorphisms of the source and target) is exactly the dimension of the fixed component passing through that point and all these weights are in the fixed part of the deformation and so will not contribute to the normal bundle.

  In the second case both of these weights are canceled out by the terms from automorphisms of the source and target and so have no contribution to the normal bundle computation.

  \item If the target is just $\PP^3$:

  If a map has no ramification inside the $\PP^3$ (away from the special divisor) it can hit the special divisor in at most $2$ points, so the deformations of the source cancel out with the automorphisms of the source.

  The deformations of the map given by $\chi(g_i^{*}\mathcal{O}(1))\otimes W^{3}$, have $2$ or $3$ wight zero terms, and similar argument as above shows that they cancel out with the zero weight terms coming from automorphisms of the source and target and one is corresponding to the 1-dimensional fixed point component passing through this point.

\end{enumerate}

So the weight zero terms coming from the base contribution won't cancel out, hence if we have more than one component being mapped to the base the total contribution will be zero.

\end{proof}

In fact from this computation we see that if we have a component with line image then we get an extra weight zero term, which means even in the case that we have only one component mapped to the base $\PO$ if we have a component with line image in one of the bubbles then the contribution of that fixed locus is also zero.

 So in order to compute the integral we only have to consider the fixed maps with only one component mapping to the base $\PO$, which we denote by $C_{b}$. Let $C_{b}\coprod C^{0}_{i} \coprod C^{\infty}_{j} \To C$ be the normalization of $C$, resolving all the nodes of $C$ with $C^{0}_{i}$(and $C^{\infty}_{j}$) irreducible components above 0 (and $\infty$ resp.).

Since the restriction of $f$ to $C_{b}$ is a degree d map, and it has ramification points only above 0 and $\infty$ it should be fully ramified at both of them, so by the admissibility condition the contact order of $C^{\bullet}_{1}$ with the exceptional divisors is d at the nodes, and these are the only points where  $C^{\bullet}_{1}$ meet these divisors, since if there were any other contact point say $p$ then the image of $p$ lies in the singular part of the target so it should be a singular point of the domain curve, and one of the irreducible components meeting at $p$ should be mapped to the base $\PO$, which by the choice of $f$ is just $C_{b}$. This means that $p$ is in the intersection of $C^{\bullet}_{1}$ and $C_{b}$, but the genus of the source curve is zero, so there is just one such point. The same argument shows that in each bubble contains the image of exactly one irreducible component of $C$.

So $C^{0}_{1}$ and $C^{\infty}_{1}$ are degree d curves with full contact with a hyperplane (the exceptional divisor of the blow up) in $\PP^3$ at a single point, and since $f$ is $\CS$ fixed they are also fixed under the induced action.

 From the description of fixed points, we see that the contact point of the image and the exceptional divisor of the blow up of $\PP^3$, for the generic point of a fixed component with non-zero contribution, can be either [0;1;0] or [0;0;1], which means that on the next bubble the image will hit the associated divisor at [0;1;0;0] or [0;0;1;0]. If we consider the fixed maps with these contact points then the same argument as above shows that they can be presented in one of the following forms (see Table~\ref{fixed points}).

\begin{table}[!ht]
\begin{tabular}{|c|l|c|}

\hline
&&\\
Contact point  &  $f: \PO \To \PP^3$ & weights of the $\CS$ action \\
 & & on $V^4$ \\
\hline
&&\\
\multirow{7}{*}{[1;0;0;0]} & $[x;y]\mapsto[x^d;l x^{h}y^{d-h};m x^{k}y^{d-k};y^d]$ &  \\
 & \hspace{50mm}$d+h=2k$ & $\{\frac{d}{d-k}\alpha,\frac{h}{d-k}\alpha,\frac{k}{d-k}\alpha,0 \}$ \\
 & \hspace{50mm}$h,k \leq d-1$ & \\
 & $[x;y]\mapsto[x^d; x^{h}y^{d-h}; 0;y^d]$ & $\{\frac{2d}{d-h}\alpha,\frac{2h}{d-h}\alpha,\frac{h+d}{d-h}\alpha,0 \}$ \\
 & \hspace{50mm}$h \leq d-1 $ & \\
 & $[x;y]\mapsto[x^d; 0; x^{k}y^{d-k};y^d]$ & $\{\frac{d}{d-k}\alpha,\frac{2k-d}{d-k}\alpha,\frac{k}{d-k}\alpha,0 \}$ \\
 & \hspace{50mm}$k \leq d-1 $& \\
  \hline
  &&\\
\multirow{7}{*}{[0;1;0;0]} & $[x;y]\mapsto[l x^{h}y^{d-h};x^d;m x^{k}y^{d-k};y^d]$ &  \\
 & \hspace{50mm}$d+h=2k$ & $\{\frac{-d}{d-k}\alpha,\frac{-h}{d-k}\alpha,\frac{-k}{d-k}\alpha,0 \}$\\
 & \hspace{50mm}$h,k \leq d-1$ & \\
 & $[x;y]\mapsto[ x^{h}y^{d-h};x^d; 0;y^d]$ & $\{\frac{-2d}{d-h}\alpha,\frac{-2h}{d-h}\alpha,\frac{-h-d}{d-h}\alpha,0 \}$ \\
 & \hspace{50mm}$h \leq d-1 $ & \\
 & $[x;y]\mapsto[ 0;x^d; x^{k}y^{d-k};y^d]$ & $\{\frac{-d}{d-k}\alpha,\frac{d-2k}{d-k}\alpha,\frac{-k}{d-k}\alpha,0 \}$ \\
 & \hspace{50mm}$k \leq d-1 $ & \\
  \hline
  &&\\
 \multirow{4}{*}{[0;0;1;0]} & $[x;y]\mapsto[ x^{h}y^{d-h}; 0;x^d;y^d]$ & $\{\frac{-d}{d-h}\alpha,\frac{-h}{d-h}\alpha,\frac{h-2d}{d-h}\alpha,0 \}$ \\
 & \hspace{50mm}$h \leq d-1 $ & \\
 & $[x;y]\mapsto[ 0; x^{k}y^{d-k};x^d;y^d]$ & $\{\frac{d}{d-k}\alpha,\frac{2d-k}{d-k}\alpha,\frac{k}{d-k}\alpha,0 \}$ \\
 & \hspace{50mm}$k \leq d-1 $ & \\
  \hline
\end{tabular}
\vspace{3mm}\caption{Fixed points}
 \label{fixed points}
 \end{table}
\vspace{3mm}

Now we compute the induced $\CS$ action on the source curve of the fixed points for which the map $f$ is of the form:

\begin{center}
     $\begin{array} {ccccl}
       f & : & \PO & \To & \PP^3 \\
         & & [x;y] & \mapsto & [ x^d;l x^{h}y^{d-h};m x^{k}y^{d-k};y^d]
 \end{array}$
 \end{center}\vspace{3mm}

 \noindent the action is given by :

 \begin{center}
     $ t.[ x^d;l x^{h}y^{d-h};m x^{k}y^{d-k};y^d]= [t^{-1} x^d;t l x^{h}y^{d-h};m x^{k}y^{d-k};y^d]$
 \end{center}\vspace{3mm}

 \noindent but we if we dilate with ratio $\w ^{k}$ , with $\w =t^{\frac{-1}{d-k}}$,(considered as an automorphism of the $\A^3\subset\PP^3$, which is an automorphism of the target as a FM-configuration) then the action can be written as:

 \begin{center}
     $ t.[ x^d;l x^{h}y^{d-h};m x^{k}y^{d-k};y^d]= [\w^{d} x^d;\w^{h} l x^{h}y^{d-h};\w^{k} m x^{k}y^{d-k};y^d]$
 \end{center}\vspace{3mm}

This means that for such $f$ the induced action on the source curve is given by $t.[x;y]=[\w x;y]$, and also the weights of the action on the 4-dimensional vector space $V^4$ (such that the bubble $\PP^3$ is isomorphic to $\mathbb{P}V^4$) are $\{\frac{d}{d-k}\alpha,\frac{h}{d-k}\alpha,\frac{k}{d-k}\alpha,0\}$.

Note that despite the fact that we considered the bubble over $0$ the above description of the fixed maps, works for any bubble up to permutation of coordinates. The weights of the action on $V^4$(defined as above) for all fixed point maps are similarly computed in Table~\ref{fixed points}.

%%%%%%%%%%%%%%%%%%%%%%%%%%%%%%%%%%%%%%%%%%%%%%%%%%%%%%%%%%%%%%%%%%%%%%%%%%%%%%%%%%%%%%%%%%%%%%%%%%%%%%%%%%%%%%%%%%%%%%%%%%%%%%%%%%%%%%%%%%%%%%%%%%%%
\section{Computation}

\subsection{Contribution of Each Fixed Locus}

Note that since we want to compute the euler class of the normal bundle of the fixed point loci with non zero contribution to integral. By the above discussion we have to consider maps with only one component being mapped to the $\PO \subset X$ and ramified at 0 and $\infty$.

To compute the euler class of the normal bundle, using $\ast$ we will break both source and target into irreducible components. By the result of previous section we have to consider three types of maps:
\begin{itemize}
\item Maps to the blow up of $X$ at 0 and $\infty$.
\item Maps to the middle bubbles which are isomorphic to blow up of $\PP^3$ at one point.
\item Maps to the end bubbles which are isomorphic to $\PP^3$.
\end{itemize}

\begin{rem}\textbf{Local Deformation/Obstruction:}

The term coming from the local obstruction, which is given by $\EExt^{1}(\Omega^{\dag}_{W},\mathcal{O}_{W})$ , have the same contribution as the term for smoothing the singularities of the target along the divisors $\left\{D^{\bullet}_{i}\right\}$. For an explicit computation see \cite[section 2.8]{GV}. Since there is only one node being mapped to each $D^{\bullet}_{i}$ their contributions will cancel out.

Note that at each node of the source curve we have a term coming from smoothing that node, which is a one dimensional space isomorphic to $T_p{C^\bullet_{i}}\otimes T_p{C^{\bullet}_{i-1}}$.
\end{rem}

\begin{enumerate}
  \item \textbf{Maps to the blow up of $X$ at 0 and $\infty$ :}

  The deformations of the target are given by moving the two bubbles, which give us two three dimensional deformation spaces which are isomorphic to $T_{0}(X)$ and $T_{\infty}(X)$. The weights of the $\CS$ action on each of these spaces are $\{\alpha,0,-\alpha\}$. There is a one dimensional automorphism group of automorphism of the source(given by dilation, which fixes both nodes) and the weight of the $\CS$ action on this space is $0$.

  To compute the contribution of the deformation/obstruction of the map, we use the long exact sequence for logarithmic differentials. In the K-theory we have: \vspace{3mm}

$\begin{array}{l}
(H^{0}-H^{1})(C_{b},f^{*}T_{B_{0}}(-logD^{0}_{0}-logD^{\infty}_{1})) = \\
\hspace{20mm}= (H^{0}-H^{1})(C_{b},f^{*}T_{B_{0}}) - H^{0}(C_{b},f^{*}N_{D^{0}_{0}}B_{0}) -  H^{0}(C_{b},f^{*}N_{D^{\infty}_{0}}B_{0})
\end{array}$ \vspace{3mm}

To compute the weights of the action on $(H^{0}-H^{1})(C_{b},f^{*}T_{B_{0}})$ using the blow up exact sequence we get:\vspace{3mm}

$\begin{array}{l}
(H^{0}-H^{1})(C_{b},f^{*}T_{B_{0}})=\\
\hspace{20mm}=(H^{0}-H^{1})(C_{b},f^{*}T_{X})  - H^{0}(C_{b},f^{*}j_{*}\mathcal{Q}_{0}) - H^{0}(C_{b},f^{*}j_{*}\mathcal{Q}_{\infty}) \\
\hspace{20mm}=(H^{0}-H^{1})(C_{b},f^{*}T_{X}) \\
\hspace{22mm}- H^{0}(C_{b},f^{*}\mathcal{O}_{D_0^0})\otimes V_0^3 + H^{0}(C_{b},f^{*}N_{D^{0}_{0}}B_{0})\\
\hspace{22mm} - H^{0}(C_{b},f^{*}\mathcal{O}_{D_0^{\infty}})\otimes V_{\infty}^3 +  H^{0}(C_{b},f^{*}N_{D^{\infty}_{0}}B_{0})
\end{array}$ \vspace{3mm}

Thus we have:\vspace{3mm}

$\begin{array}{l}
(H^{0}-H^{1})(C_{b},f^{*}T_{B_{0}}(-logD^{0}_{0}-logD^{\infty}_{0})) = \\
\hspace{15mm}= (H^{0}-H^{1})(C_{b},f^{*}T_{B_{0}}) - H^{0}(C_{b},f^{*}N_{D^{0}_{0}}B_{0}) -  H^{0}(C_{b},f^{*}N_{D^{\infty}_{0}}B_{0}) \\
\hspace{15mm}= (H^{0}-H^{1})(C_{b},f^{*}T_{X}) - H^{0}(C_{b},f^{*}\mathcal{O}_{D_0^0})\otimes V_0^3 - H^{0}(C_{b},f^{*}\mathcal{O}_{D_0^{\infty}})\otimes V_{\infty}^3
\end{array}$ \vspace{3mm}

Note that $f^{*}\mathcal{O}_{D_0^{\bullet}}$ is isomorphic to $ \mathcal{O}_{C_{b}}/ \mathfrak{m}_p^d$ where $p$ is the node of $C_{b}$ above the point $0\in \PO$. We have the following exact sequence:

$$0 \To  (\mathfrak{m}_p/\mathfrak{m}_p^2)^{\otimes k-1} \To \mathcal{O}_{C_{b}}/ \mathfrak{m}_p^k \To \mathcal{O}_{C_{b}}/ \mathfrak{m}_p^{k-1} \To 0$$

\noindent so the weights of the action on $H^{0}(C_{b},f^{*}\mathcal{O}_{D_0^{\bullet}})$ are given by $\{\frac{i}{d}\alpha\}_{i=0}^{d-1}$.

Since $T_X=T_{\PP^1}\oplus\mathcal{O}(-1)\oplus \mathcal{O}(-1)$, the contribution of $(H^{0}-H^{1})(C_{b},f^{*}T_{X})$ to the normal bundle, after cancellation with a weight zero contribution of the automorphism of the source, is $-\alpha^2$, since after the cancellation of the weight zero term the contribution of $(H^{0}-H^{1})(C_{b},f^{*}T_{\PP^1})$ is $$\alpha^{2d}\frac{(-1)^d (d!)^2}{d^{2d}},$$ and the contribution of $(H^{0}-H^{1})(C_{b},f^{*}\mathcal{O}(-1)\oplus \mathcal{O}(-1))$ is $$\alpha^{2-2d}\frac{(-1)^{d-1}d^{2d-2}}{(d-1)!^2}.$$

The contribution of $- H^{0}(C_{b},f^{*}\mathcal{O}_{D_0^0})\otimes V_0^3 - H^{0}(C_{b},f^{*}\mathcal{O}_{D_0^{\infty}})\otimes V_{\infty}^3$ to the normal bundle (after cancellation with the two weight zero terms coming from moving the bubbles) is $$(-1)^{3d-1} \alpha^{2-6d} (\frac{d^{3d-1}}{d!(2d-1)!})^2.$$

Moving the bubbles give $\alpha^{4}$,$-T_{D}|_{f(N)}$ terms give $\frac{1}{4} \alpha^{-4}$. Hence the weight of the action on the normal bundle is 
 $$(-1)^{3d-1}\alpha^{4-6d} (\frac{d^{3d}}{d!(2d)!})^2.$$

Note that there is also a factor of $\frac{1}{d}$ for the automorphisms of the source and since the contribution to the localization formula is $\frac{1}{e(N)}$ so the total contribution of the base component is
$$\frac{(-1)^{3d-1}}{d}\alpha^{6d-4} (\frac{d!(2d)!}{d^{3d}})^2 .$$

\item \textbf{Maps to the ruled bubbles :}

By the description of fixed maps from previous section, we know that each bubble in the middle has exactly one point (say $p$) blown up, so after fixing $p$ to be the origin in $\A^3$,the only automorphism/deformation terms that we have to consider are dilation automorphisms, which is a one dimensional group.

The contribution from deformation/obstruction of $f$ can be computed by the weights of the $\CS$ action on $(H^0-H^1)(C_i,f^{*}T_{B_i}(-logD_{i}-logD_{i-1}))$. Using the exact sequence for logarithmic tangent bundle and the blow up exact sequence we have:\vspace{3mm}

$\begin{array}{l}
(H^0-H^1)(C_i,f^{*}T_{B_i}(-logD_{i}-logD_{i-1})) \vspace{2mm} \\
\hspace{30mm} = (H^0-H^1)(C_i,f^{*}T_{B_i}(-logD_{i-1}))- H^{0}(C_{i},f^{*}N_{D_{i}}B_{i}) \vspace{2mm} \\
\hspace{30mm} = (H^0-H^1)(C_i,f^{*}T_{\PP^3}(-logD_{i-1})) +  H^{0}(C_{i},f^{*}N_{D_{i}}B_{i}) \\
\hspace{54mm} - V^3 \otimes H^{0}(C_{i},f^{*}\mathcal{O}_{D_{i}}) - H^{0}(C_{i},f^{*}N_{D_{i}}B_{i}) \vspace{2mm} \\
\hspace{30mm} = W^3 \otimes H^0(C_i,f^{*}\mathcal{O}_{\PP^3}(1))- V^3 \otimes H^{0}(C_{i},f^{*}\mathcal{O}_{D_{i}})
\end{array}$ \vspace{3mm}

We have to consider several cases, according to the contact point with the distinguished divisor in $\PP^3$ and contact point with the exceptional divisor of the blowup as in Table~\ref{fixed points}.

If the contact point is $[1:0:0:0]$, using Table~\ref{fixed points} we see that in one case there is a 1-dimensional fixed point locus given by :

 $$\begin{array} {ccccll}
       f & : & \PO & \To & \PP^3 & \\
         & & [x;y] & \mapsto & [ x^d;l x^{h}y^{d-h};m x^{k}y^{d-k};y^d] & \text{with } d+h=2k
 \end{array}$$

 For these fixed points there are three weight zero terms coming from the deformation of the map, and two of them cancel out with similar terms in the automorphisms of the source and target, and the remaining weight zero term is in the fixed part of the deformation space and in fact is just the tangent to the fixed locus. Hence the contribution of this component should be computed by integrating the restriction of the obstruction bundle over the fundamental class of this component.

 \textbf{Obstruction bundle:} The obstruction bundle has a trivial summand coming from predeformability condition, which is isomorphic to $T_D$. The fiber of the other part of the obstruction bundle over a given map is given by $V^3 \otimes H^{0}(C_{i},f^{*}\mathcal{O}_{D_{i}})$.

 Before we compute the first Chern class of the obstruction bundle over this 1-dimensional component, we digress and describe the maps in this locus.

 Given a map of the form $[ x^d;l x^{h}y^{d-h};m x^{k}y^{d-k};y^d]$ up to an automorphism of the source and the target it is equivalent to $[v w^d x^d;v w^h l x^{h}y^{d-h};v w ^k m x^{k}y^{d-k};y^d]$.

 If $l \neq 0$ pick $w=l^{\frac{1}{d-h}}$ and $v=l^{\frac{-d}{d-h}}$ we get $[x^d; x^{h}y^{d-h}; m l^{\frac{-1}{2}} x^{k}y^{d-k};y^d]$, and if $m \neq 0$ pick $w=m^{\frac{1}{d-k}}$ and $v=m^{\frac{-d}{d-k}}$ we get $[x^d;m^{-2} l x^{h}y^{d-h}; x^{k}y^{d-k};y^d]$.

 Now it is easy to see that if $m$ goes to zero the limiting map is given by $[x^d;x^{h}y^{d-h}; 0 ;y^d]$, but if $l$ goes to zero then the source and target will bubble up and the limiting map has two component instead of one. The map for the component in the first bubble is given by $[x^d;0; x^{k}y^{d-k};y^d]$ and the map in the second bubble is given by: $[0; x^{h}y^{k-h}; x^{k};y^k]$.

%////////////////ADD AN IMAGE HERE///////////////////////

Let $\mathfrak{Ob}$ be the bundle with fiber $V^3 \otimes H^{0}(C,f^{*}\mathcal{O}_{D_s})$ over a given fixed map $f_s$ in this family, where $D_s$ is the distinguished divisor for the given map in the family (so the image of $f_s$ hits $D_s$ with multiplicity $h$), and $V^3$ is the tangent space of the point $[0:0:0:1]$ in $\PP^3$.

Note that the fiber of $\mathfrak{Ob}$ over a given curve is isomorphic to $  V^3\otimes\mathcal{O}_{C_s}/ \mathfrak{m}_{p_s}^h$ where $p_s$ is the contact point of the image of $C_s$ and $D_s$. Let $\mathfrak{Ob}_{i}$ be the bundle with fiber $ \mathcal{O}_{C_s}/ \mathfrak{m}_{p_s}^i$ over a given curve, in particular  $\mathfrak{Ob}=V^3\otimes\mathfrak{Ob}_{h}$.

We have the following exact sequence:

$$0 \To  (\mathfrak{m}_{p_s}/\mathfrak{m}_{p_s}^2)^{\otimes i-1} \To \mathcal{O}_{C_s}/ \mathfrak{m}_{p_s}^i \To \mathcal{O}_{C_s}/ \mathfrak{m}_{p_s}^{i-1} \To 0$$

The above exact sequence shows:

$$0 \To  \mathbb{L}^{\otimes i-1} \To \mathfrak{Ob}_{i} \To \mathfrak{Ob}_{i-1} \To 0$$

\noindent where $\mathbb{L}$ is the the bundle whose fiber over a given curve $C_s$ is the cotangent space of the point $p_s \in C_s$. We denote the first Chern class of $\mathbb{L}$ by $\psi$. The weight of the $\CS$ action on the fibers of $\mathbb{L}$ is $\frac{-\alpha}{d-k}$, so the equivariant first Chern class of $\mathbb{L}$ is $\frac{-\alpha}{d-k}+\psi$. The weights of the action on $V^3$ are $\{\frac{h}{d-k},\frac{k}{d-k},\frac{d}{d-k}\}$. Hence the equivariant total Chern class of $\mathfrak{Ob}$ is:

$$\displaystyle \prod_{i=0}^{h-1} (1+\frac{(h-i)\alpha}{d-k}+i\psi)(1+\frac{(k-i)\alpha}{d-k}+i\psi)(1+\frac{(d-i)\alpha}{d-k}+i\psi)$$

\noindent Note that $\psi^2=0$ so it can be written as :

$$\displaystyle \left(\frac{1}{d-k}\right)^{3h} \frac{h!k!d!}{(d-h)!(k-h)!} \left[\alpha^{3h}+(d-k) \psi \alpha^{3h-1}\left( \sum_{i=0}^{h-1} \frac{i}{h-i}+\frac{i}{k-i}+\frac{i}{d-i}  \right)\right]$$

Note that the weights of the action on $W^3$ are $\{\frac{h}{n-k}\alpha,\frac{k}{d-k}\alpha,\frac{d}{d-k}\alpha\}$, and the weights of the action on $H^0(C_i,f^{*}\mathcal{O}_{\PP^3}(1))$ are $\{\frac{-i}{d}\alpha\}_{i=0}^{d}$, so the contribution of 
$W^3 \otimes H^0(C_i,f^{*}\mathcal{O}_{\PP^3}(1))- V^3 \otimes H^{0}(C_{i},f^{*}\mathcal{O}_{D_{i}})$ is equal to 
$$(-1)^{h+k}[\frac{1}{(d-h)!(d-k)!}]^2(\frac{\alpha}{d-k})^{3h-3d-1} \psi  \left( \sum_{i=0}^{h-1} \frac{i}{h-i}+\frac{i}{k-i}+\frac{i}{d-i}  \right)$$

Similarly we can compute the contribution of other fixed point maps, the values are shown in Table~\ref{IntegralTable 1}.

\begin{table}[!h]
\begin{tabular}{|l|c|c|}

\hline
& &\\
  $f: \PO \To \PP^3$ & The contribution of  & $T_D$, \\
  & \tiny{$W^3 \otimes H^0(C_i,f^{*}\mathcal{O}_{\PP^3}(1))- V^3 \otimes H^{0}(C_{i},f^{*}\mathcal{O}_{D_{i}})$} &  automorphisms \\
  & & \\
\hline
& & \\
  $[x;y]\mapsto[x^d;l x^{h}y^{d-h};m x^{k}y^{d-k};y^d]$ &  & \\
  \hspace{30mm}$d+h=2k$ & $(-1)^{h+k}[\frac{1}{(d-h)!(d-k)!}]^2(\frac{\alpha}{d-k})^{3h-3d-1} \psi \cdot$ & $2\alpha	^2. \frac{1}{d-k}$ \\
  \hspace{30mm}$h,k \leq d-1$ & $\left( \sum_{i=0}^{h-1} \frac{i}{h-i}+\frac{i}{k-i}+\frac{i}{d-i}  \right)$& \\
  & &\\
 $[x;y]\mapsto[x^d; x^{h}y^{d-h}; 0;y^d]$ & $(-1)^{d+\lceil\frac{d+h}{2}\rceil}2^{d-h}[\frac{1}{(d-h)!(d-h)!!}]^2(\frac{2\alpha}{d-h})^{3h-3d-1}$ &$2\alpha^2 \frac{1}{d-h}$\\
 \hspace{30mm}$h \leq d-1 $ &  &\\
 \hspace{30mm}$h\neq d(mod 2)$ & & \\
  & & \\
  $[x;y]\mapsto[x^d; 0; x^{k}y^{d-k};y^d]$ & $\frac{-1}{d-k}\frac{1}{(d-k)!(2d-2k)!}(\frac{\alpha}{d-k})^{3k-3d-1}$ &$-\alpha^2\frac{1}{d-k}$\\
\hspace{30mm}$k \leq d-1 $& &\\

  \hline
 & &\\
 $[x;y]\mapsto[l x^{h}y^{d-h};x^d;m x^{k}y^{d-k};y^d]$ &  &\\
 \hspace{30mm}$d+h=2k$ & $(-1)^{h+k+1}[\frac{1}{(d-h)!(d-k)!}]^2(\frac{\alpha}{d-k})^{3h-3d-1}$ &$2\alpha^2. \frac{1}{d-k}$\\
  \hspace{30mm}$h,k \leq d-1$ & $\left( \sum_{i=0}^{h-1} \frac{i}{h-i}+\frac{i}{k-i}+\frac{i}{d-i}  \right) $&\\
  & &\\
  $[x;y]\mapsto[ x^{h}y^{d-h};x^d; 0;y^d]$ & $(-1)^{d+\lceil\frac{d+h}{2}\rceil}2^{d-h}[\frac{1}{(d-h)!(d-h)!!}]^2(\frac{2\alpha}{d-h})^{3h-3d-1}$ &$2\alpha^2 \frac{1}{d-h}$\\
  \hspace{30mm}$h \leq d-1 $ & &\\
\hspace{30mm}$h\neq d(mod 2)$ & &\\
  & &\\
  $[x;y]\mapsto[ 0;x^d; x^{k}y^{d-k};y^d]$ & $\frac{(-1)^{d+k}}{d-k}\frac{1}{(d-k)!(2d-2k)!}(\frac{\alpha}{d-k})^{3k-3d-1}$ & $-\alpha^2\frac{1}{d-k}$\\
 \hspace{30mm}$k \leq d-1 $ & & \\

  \hline
&& \\
 $[x;y]\mapsto[ x^{h}y^{d-h}; 0;x^d;y^d]$ & $\frac{-1}{d-h}\frac{1}{(d-h)!(2d-2h)!}(\frac{\alpha}{d-h})^{3h-3d-1}$& $2\alpha^2 \frac{1}{d-h}$\\
 \hspace{30mm}$h \leq d-1 $ & &\\
  & &\\
   $[x;y]\mapsto[ 0; x^{k}y^{d-k};x^d;y^d]$ & $\frac{(-1)^{d-k}}{d-k}\frac{1}{(d-k)!(2d-2k)!}(\frac{\alpha}{d-k})^{3k-3d-1}$  &$2\alpha^2\frac{1}{d-k}$\\
 \hspace{30mm}$k \leq d-1 $ & &\\

  \hline
\end{tabular}
\vspace{3mm}\caption{Maps to the ruled bubbles}
 \label{IntegralTable 1}
 \end{table}
\vspace{3mm}

\begin{rem}
One should note that any map of the form  $[x;y]\mapsto[x^d; x^{h}y^{d-h}; 0;y^d]$ with $h=d(mod 2)$ is an element of the family  $[x;y]\mapsto[x^d;l x^{h}y^{d-h};m x^{k}y^{d-k};y^d]$. So we do not have to consider their contribution separately, and that is why in the Table~\ref{IntegralTable 1} we consider the case $d\neq h (mod2)$.
\end{rem}

 \textbf{Integral of $\psi$:}
In order to compute the integral of $\psi$ over the one dimensional fixed point locus, we use the localization again. A generic member of the fixed point loci is given by $[x;y]\mapsto[x^d;l x^{h}y^{d-h};m x^{k}y^{d-k};y^d]$, and we consider the $\CS$-action on the target given by $t.[x_0:x_1:x_2:x_3]\mapsto[t^{-1} x_0:t x_1:t x_2:x_3]$. This action has only two fixed points:
\begin{itemize}
\item The map given by $[x;y]\mapsto[x^d; x^{h}y^{d-h}; 0;y^d]$.
\item The map whose source curve has two components, and maps on the two components are given by:  $[x;y]\mapsto[x^d; 0; x^{k}y^{d-k};y^d]$ and $[x;y]\mapsto[ 0; x^{k}y^{d-k};x^d;y^d]$.
\end{itemize}

   The weight of the action on the normal bundle of the first fixed point is $2\beta$ (where $\beta$ is the weight of the dual of the standard representation), and the weight of the action on the fiber of $\psi$ over this point is $\frac{-2}{d-h}\beta$. The weight of the action on the fiber of $\psi$ over the second fixed point is zero so the contribution of the second fixed point to the integral is zero. Hence the integral of the $\psi$ over this locus is $\frac{-1}{d-h}$.

  \item \textbf{Maps to the $\PP^3$ :}

  Similarly we have:

$$(H^0-H^1)(C_i,f^{*}T_{B_i}(-logD_{i-1}))= W^3 \otimes H^0(C_i,f^{*}\mathcal{O}_{\PP^3}(1))$$

\noindent where $W^3$ is the tangent space of the point $[0:0:0:1]$ in $\PP^3$. One should note that similar to the case of maps to the ruled bubbles, in some cases we have a one dimensional fixed locus. In this case the group of isomorphisms of the source that fix the the node, is two dimensional which decomposes into two eigenspaces. One of which is just the tangent space over the point $[0:1]$, which is the dual of the $\psi$ which we considered in the last case. The other eigen space is trivial as a bundle and has $\CS$-weight equal to zero. There is one more minor change which comes from deformations of the target. In this case since the target is $\PP^3$ we have a four  dimensional group of automorphisms of the target that has a weight zero term for the dilation and the other three weights are the weight of the action on tangent space of the point $[0;0;0;1]$. Similar to the cases with ruled bubble targets, in Table~\ref{IntegralTable 2} we list all the possible maps and their contributions. 

\begin{table}[!ht]
\begin{tabular}{|l|c|}

\hline
  $f: \PO \To \PP^3$ & The contribution of  \\
  & \tiny{$W^3 \otimes H^0(C_i,f^{*}\mathcal{O}_{\PP^3}(1))$} \\
\hline
  $[x;y]\mapsto[x^d;l xy^{d-1};m x^{k}y^{d-k};y^d]$ &  \\
  \hspace{30mm}$d+1=2k$ & $(-1)^{k+1}[\frac{1}{(d-1)!(d-k)!}]^2(\frac{\alpha}{d-k})^{3-3d} \psi^{\vee} \cdot$\\
  & \\
 $[x;y]\mapsto[x^d; xy^{d-1}; 0;y^d]$ & $(-1)^{d+\lceil\frac{d+1}{2}\rceil}2^{d}[\frac{1}{(d-1)!(d-1)!!}]^2(\frac{2\alpha}{d-1})^{3-3d}$ \\
 \hspace{30mm}$d \neq 1(mod 2)$ & \\
  & \\
  $[x;y]\mapsto[x^2; 0; xy;y^2]$ & $\frac{(-1)}{2}\alpha^{-3}$ \\

  \hline
 $[x;y]\mapsto[l x y^{d-1};x^d;m x^{k}y^{d-k};y^d]$ &  \\
 
 \hspace{30mm}$d+1=2k$ & $(-1)^{k}[\frac{1}{(d-1)!(d-k)!}]^2(\frac{\alpha}{d-k})^{3-3d} \psi^{\vee} \cdot$\\

  & \\
  $[x;y]\mapsto[ xy^{d-1};x^d; 0;y^d]$ & $(-1)^{d+\lceil\frac{d+1}{2}\rceil}2^{d}[\frac{1}{(d-1)!(d-1)!!}]^2(\frac{2\alpha}{d-1})^{3-3d}$ \\
\hspace{30mm}$d\neq 1(mod 2)$ & \\
  & \\
  $[x;y]\mapsto[ 0;x^2; xy;y^2]$ & $\frac{(-1)}{2}\alpha^{-3}$  \\

  \hline
 $[x;y]\mapsto[ xy^{d-1}; 0;x^d;y^d]$ & $\frac{-1}{d-1}\frac{1}{(d-1)!(2d-2)!}(\frac{\alpha}{d-1})^{3-3d}$ \\
  & \\
   $[x;y]\mapsto[ 0; xy^{d-1};x^d;y^d]$ & $\frac{(-1)^{d-1}}{d-1}\frac{1}{(d-1)!(2d-2)!}(\frac{\alpha}{d-1})^{3-3d}$  \\
 \hspace{30mm}$k \leq d-1 $ & \\

  \hline
\end{tabular}
\vspace{3mm}\caption{Maps to the $\PP^3$}
 \label{IntegralTable 2}
 \end{table}
\vspace{3mm}
\end{enumerate}

\subsection{Explicit Computations}

\subsubsection{The degree 2 case}

The contribution of the component that is mapped to the base is 
$$\frac{(-1)^{3d-1}}{d}\alpha^{6d-4} (\frac{d!(2d)!}{d^{3d}})^2 =-\alpha^{8}\frac{9}{32}$$

\emph{Smoothing the node over $0$}: If the map is given by $[x^2; 0; xy;y^2]$ smoothing the node gives $-\frac{2}{3\alpha}$ and the normal bundle contribution is $\frac{-1}{2}\alpha^{-3}$. If the map is given by $[x^2; xy ; 0 ; y^2]$ the smoothing contribution is $-\frac{2}{5\alpha}$ and normal bundle contribution is $\frac{1}{2}\alpha^{-3}$. So in the bubble above $0$ the contribution is $\frac{2}{15}\alpha^{-4}$. The contribution of the bubble above $\infty$ is similar and we have to change the $\alpha$ to $-\alpha$. So the integral for the double cover is $-\frac{1}{2^3 5^2}$.  

\subsubsection{The general case} In general we have to consider all the possible ways that the fixed maps in the bubbles could have. In the first bubble above zero the contact point with the distinguished divisor is $[1;0;0;0]$, and by Table~\ref{fixed points} the contact point of the map in the next bubble with the distinguished divisor will be either $[0;1;0;0]$ or $[0;0;1;0]$. For each of these cases we have to consider the possible maps, and since the degree of map decreases from one bubble to the next, this process will stop. By Table~\ref{IntegralTable 1} and Table~\ref{IntegralTable 2} we can compute the contribution of each of these components, and by multiplying them we get the total contribution of that fixed locus. We have a finite number of such configurations, and by adding their contributions we get the value of the integral. 

The author has written a computer program for this computation, in the following table we have the values for $d<10$.
\begin{table}[!h]
\begin{tabular}{|l|c|}\hline
$d$ & contribution\\
\hline
2 & $-\frac{1}{2^3 5^2}$\\
\hline
3 & $-\frac{5^2 43^2}{3^{13}7^{ 2}}$\\
\hline
4 & $-\frac{3^6 7^2 233^2}{2^{44} 11^2}$\\
\hline
5 & $-\frac{3^2  {190562513^2}}{2^4 5^{25} 11^2 13^2}$\\
\hline  
6 & $-\frac{ {11^ 2} 37^2 107^2 1699^2 22531^2}{2^{37} 3^{29} 13^2 17^2}$\\
\hline
7 & $-\frac{{3^6}{11^2}{13^2}{61^2}{73^2}{122439620123^2}}{{2^12}{5^2}{7^37}{17^2}{19^2}}$\\
\hline
8 & $-\frac{{11740987^2}{49789008475889939^2}}{{2^145}{3^2}{17^2}{19^2}{23^2}}$\\
\hline
9 & $-\frac{{11^2}{13^2}{17^2}{3373^2}{38721049^2}{1842598673213^2}}{{2^12}{3^98}{5^4}{19^2}{23^2}}$\\
\hline
\end{tabular}
\end{table}

% ----------------------------------------------------------------
%\bibliographystyle{amsplain}
\bibliography{}

\end{document}